\date{}
\title{}
\newcommand{\bX}{\mathbf{X}} 
\newcommand{\eps}{\epsilonup}
\newcommand{\1}{\mathbf{1}}
\newcommand{\R}{\mathbb{R}}
\newcommand{\T}{\mathsf{T}}
\newcommand{\sto}[2]{\stackrel[#2]{#1}{\longrightarrow}}
\newcommand{\ston}[1]{\stackrel[n\to\infty]{#1}{\longrightarrow}}
\newcommand{\tn}[1]{\textnormal{#1}}
\newcommand{\norm}[1]{\left\|#1\right\|}
\newcommand{\bpm}{\begin{pmatrix}}
\newcommand{\epm}{\end{pmatrix}}
\newcommand{\bsm}{\begin{smallmatrix}}
\newcommand{\esm}{\end{smallmatrix}}
\newtheorem{theorem}{Theorem}[section]
\newtheorem{corollary}[theorem]{Corollary}
\theoremstyle{definition}
\newtheorem{remark}[theorem]{Remark}
\numberwithin{equation}{section}
\begin{document}

\title[Sample covariance matrices of non-linear processes with infinite variance]{Eigenvalues of sample covariance matrices of non-linear processes with infinite variance}
\author{Richard A. Davis}
\address{Department of Statistics, Columbia University, New York, NY 10027, USA}
\email{rdavis@stat.columbia.edu}
\author[O. Pfaffel]{Oliver Pfaffel}
\address{TUM Institute for Advanced Study \& Department of Mathematics, Technische Universit\"at M\"unchen, Germany}
\email{o.pfaffel@gmx.de}

\date{}
\begin{abstract}
We study the $k$-largest eigenvalues of heavy-tailed sample covariance matrices of the form $\bX\bX^\T$ in an asymptotic framework, where the dimension of the data and the sample size tend to infinity. To this end, we assume that the rows of $\bX$ are given by independent copies of some stationary process with regularly varying marginals with index $\alpha\in(0,2)$ satisfying large deviation and mixing conditions. We apply these general results to stochastic volatility and GARCH processes.
\end{abstract}
\keywords{Random Matrix Theory, heavy-tailed distribution, dependent entries, largest eigenvalue,  sample covariance matrix, stochastic volatility, GARCH}
\maketitle

\section{Introduction}

In the statistical analysis of high-dimensional data one often tries to reduce its dimensionality while preserving as much of the variation in the data as possible. One important example of such an approach is the Principal Component Analysis (PCA). PCA makes a linear transformation of the data to a new set of variables, the principal components, which are ordered such that the first few retain most of the variation. Therefore one obtains a lower dimensional representation of the data by retaining only the first few principal components.

The variances of the first $k$ principal components are given by the $k$-largest eigenvalues of the covariance matrix. Let us collect the samples of our multivariate data in a $p\times n$ matrix $\bX$, where we refer to $p$ as the dimension of the data and to $n$ as the sample size. In practice, the true underlying covariance matrix is not available, thus one usually replaces it with the sample covariance matrix $\frac{1}{n} \bX\bX^\T$. For more details on PCA we refer the reader to one of the many textbooks available on this topic, see \cite{Anderson2003} or \cite{Jolliffe2002}, for example. 

To account for large high-dimensional data sets, we  study the $k$-largest eigenvalues of the sample covariance matrix when both the dimension of the data $p$ as well as the sample size $n$ go to infinity.  
The field of research that investigates the spectral properties of large dimensional random matrices has become known as {Random Matrix Theory (RMT)}.
There exist several survey articles which stress the close relationship between Random Matrix Theory and multivariate statistics, including PCA, see e.g. \cite{ElKaroui2005} and \cite{Johnstone2007}. Some authors have already employed tools from Random Matrix Theory to correct traditional tests or estimators which fail when the dimension of the data cannot be assumed to be negligible compared to the sample size. For example, Bai et al. \cite{Bai2009} gives corrections on some likelihood ratio tests that even fail even for moderate dimension (around 20), and El Karoui \cite{ElKaroui2008} consistently estimates the spectrum of a large dimensional covariance matrix using Random Matrix Theory.

Davis, Pfaffel and Stelzer \cite{Davis2011} study the $k$-largest eigenvalues of a sample covariance matrix based on observations that come from a high-dimensional linear process with heavy-tailed marginals. It is often necessary to use non-linear instead of linear models to capture the complex dependence structure of the data.
This is particularly true in finance where the log-returns exhibit both non-linearity and heavy-tails. The objective of this paper is to extend some of the results in \cite{Davis2011} to a non-linear setting.

 
In \cref{sec general result} we study non-linear processes with regularly varying tail probabilities with index less than two which satisfy certain large deviation and mixing conditions. We then apply our results to two heavily employed models in finance; stochastic volatility models in \cref{sec sv model} and GARCH($p$,$q$) processes in \cref{sec garch}. More background on these processes and other financial time series models may be found in \cite{Andersen2009}, for example. We assume throughout sections \ref{sec general result} -- \ref{sec garch} that the dimension $p\approx n^\beta$ with $\beta>0$ satisfying $\beta<\frac{2-\alpha}{\alpha-1}$ if $1<\alpha<2$. This restriction is rather general, however, if $\alpha$ is close to $2$, it becomes quite strict. Therefore we will present a result for the largest eigenvalue of $\bX\bX^\T$ in \cref{sec sv model 2} that holds for $p\geq n$ independently of the value of $\alpha$ as long as $0<\alpha<2$.

This article makes heavy use of the theory of regular variation and point processes. A good introduction may be found in Resnick \cite{Resnick2008}, for example.

\section{Eigenvalues of heavy-tailed sample covariance matrices of stationary processes}\label{sec general result}

Throughout this section we assume that $(X_t)$ is a strictly stationary sequence of random variables with marginals that are regularly varying with tail index smaller than two. In other words, there exist a normalizing sequence $(a_n)$ and an $\alpha\in(0,2)$ such that, for any $x>0$,
\begin{align}\label{reg var X}
 nP(|X_0|>a_n x) \ston{} x^{-\alpha}.
\end{align}
Moreover we assume that $X_0$ satisfies the tail balancing condition, i.e., that the limit
\begin{align*}
\lim_{x\to\infty}\frac{P(X_0>x)}{P(|X_0|>x)} \quad\tn{exists.}
\end{align*}
Then we construct our $p\times n$ observation matrix $\bX=(X_{it})_{it}$ as follows: for each $1\leq i\leq p$, let $(X_{it})_{1\leq t\leq n}$ be an independent copy of $(X_{t})_{1\leq t\leq n}$. This means that $(X_{it})_t$ and $(X_t)_t$ have the same distribution, and all rows of $\bX$ are independent. We denote by $\lambda_1,\ldots,\lambda_p\geq 0$ the eigenvalues of $\bX\bX^\T$ and study them via their induced point process
\[ \sum_{i=1}^p\eps_{a_{np}^{-2}\lambda_i}(B) = \left|\left\{1\leq i\leq p:\, a_{np}^{-2}\lambda_i\in B\right\}\right|, \quad B\subseteq(0,\infty). \]
Our motivation to study this problem comes from the statistical analysis of high-dimensional data. Therefore we assume in the following that $p=p_n$ is an integer valued sequence such that $p_n\to\infty$ as $n\to\infty$.\\


The theorem below is a combination of results from \cite{Davis2011}, where mostly linear processes are studied. This general theorem will be the basis of all our further results.

\begin{theorem}\label{generaltheorem}
Assume that there exist $b\geq 0$ and $\alpha\in(0,2)$ such that
\begin{align} pP\left(\sum_{t=1}^n X_t^2>a_{np}^2 x\right) \ston{} b x^{-\alpha/2} \quad\tn{ for each }x>0. \label{largedeviation}\end{align}
Suppose that $p=p_n\to\infty$ and $n\to\infty$ such that
\begin{align}\label{beta condition}
\limsup_{n\to\infty}\frac{p_n}{n^\beta}<\infty 
\end{align}
for some $\beta>0$, satisfying $\beta<\frac{2-\alpha}{\alpha-1}$ if $1<\alpha<2$.
Then we have that
\begin{align}\label{ev_result}
\sum_{i=1}^p\eps_{a_{np}^{-2}\lambda_i} \sto{D}{} \sum_{i=1}^\infty\eps_{b^{2/\alpha}\Gamma_i^{-2/\alpha}}
\end{align}
as $n\to\infty$, where $\Gamma_i=E_1+\ldots+E_i$ is the successive sum of independent and identically distributed (iid) exponential random variables $E_k$ with mean one. 
\end{theorem}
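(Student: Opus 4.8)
The plan is to compare the eigenvalues of $\bX\bX^\T$ with its diagonal entries and to show that the off-diagonal part is asymptotically negligible. Write $\bX\bX^\T = \mathbf{D} + \mathbf{R}$, where $\mathbf{D} = \diag(D_1,\ldots,D_p)$ collects the diagonal entries $D_i = \sum_{t=1}^n X_{it}^2$ and $\mathbf{R}$ is the off-diagonal remainder with entries $R_{ij} = \sum_{t=1}^n X_{it}X_{jt}$ for $i\neq j$.

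First I would treat the diagonal point process. Since the rows of $\bX$ are independent copies of $(X_t)$, the variables $D_1,\ldots,D_p$ are i.i.d., and the large deviation hypothesis \eqref{largedeviation} reads $pP(a_{np}^{-2}D_1 > x)\to bx^{-\alpha/2}$ for every $x>0$. This is precisely the condition under which the point process of an i.i.d.\ triangular array converges: by the standard criterion for i.i.d.\ points (see Resnick \cite{Resnick2008}), $\sum_{i=1}^p\eps_{a_{np}^{-2}D_i}$ converges in distribution, in the vague topology on the Radon measures on $(0,\infty)$, to a Poisson random measure $N$ whose mean measure $\nu$ is determined by $\nu((x,\infty)) = bx^{-\alpha/2}$. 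A short computation of the mean measure of the right-hand side of \eqref{ev_result} shows that $b^{2/\alpha}\gamma^{-2/\alpha} > x$ iff $\gamma < bx^{-\alpha/2}$, so that $N \eqd \sum_{i=1}^\infty\eps_{b^{2/\alpha}\Gamma_i^{-2/\alpha}}$, since $(\Gamma_i)$ are the points of a unit rate Poisson process on $(0,\infty)$. Thus the diagonal point process already has the desired limit.

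The main work, and the main obstacle, is to show that $\mathbf{R}$ does not affect the largest eigenvalues, i.e.\ that $a_{np}^{-2}\norm{\mathbf{R}}\to 0$ in probability, with $\norm{\cdot}$ the operator norm. Granting this, Weyl's inequality gives $\max_{1\leq i\leq p}|\lambda_{(i)} - D_{(i)}| \leq \norm{\mathbf{R}}$ for the decreasingly ordered eigenvalues $\lambda_{(i)}$ of $\bX\bX^\T$ and the ordered diagonal entries $D_{(i)}$; after normalization by $a_{np}^{-2}$ the right-hand side is negligible, so the point processes $\sum_i\eps_{a_{np}^{-2}\lambda_i}$ and $\sum_i\eps_{a_{np}^{-2}D_i}$ share the same weak limit, which establishes \eqref{ev_result}. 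To control $\norm{\mathbf{R}}$ I would exploit that, because the two rows entering $R_{ij}$ are independent, each product $X_{it}X_{jt}$ has a strictly lighter effective tail (index $\alpha$) than $X_{it}^2$ (index $\alpha/2$); bounding $\norm{\mathbf{R}}$ through a truncation argument together with a row-sum estimate such as $\norm{\mathbf{R}}\le \max_i\sum_{j\neq i}|R_{ij}|$, after discarding a negligible set of large entries, then amounts to summing over the $\binom{p}{2}$ off-diagonal blocks and the $n$ summands inside each. This is exactly where the growth restriction \eqref{beta condition}, namely $\beta<\frac{2-\alpha}{\alpha-1}$ in the regime $1<\alpha<2$, enters: it keeps the accumulated off-diagonal mass below the order $a_{np}^2$ fixed by the diagonal. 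For $\alpha<1$ no centering is needed and the bound is comparatively soft, whereas for $1<\alpha<2$ one must center the now integrable products $X_{it}X_{jt}$ and show that the centered fluctuations are of smaller order than $a_{np}^2$; carrying out this centering and the associated variance bound uniformly over the $p\times p$ entries is the delicate part of the argument.

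Finally I would assemble the pieces: the vague convergence of the diagonal point process from the first step, the operator-norm negligibility of $\mathbf{R}$ from the second, and the Weyl perturbation bound combine to yield \eqref{ev_result}. Since the statement is assembled from results in \cite{Davis2011}, several of the quantitative estimates in the off-diagonal step can be imported from there rather than reproved, the only genuinely new input being the verification that the large deviation condition \eqref{largedeviation} remains compatible with the diagonal-approximation machinery in the present, possibly non-linear, model.
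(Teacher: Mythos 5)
Your proposal is correct and follows essentially the same route as the paper's proof: the paper likewise approximates $\bX\bX^\T$ by its diagonal (citing Proposition 3.3 of \cite{Davis2011} for $a_{np}^{-2}\norm{\bX\bX^\T-D}_2\ston{P}0$), applies Weyl's inequality, obtains the Poisson limit of the diagonal point process from the large-deviation condition \eqref{largedeviation} via \cite[Proposition 3.21]{Resnick2008}, and transfers the limit to the eigenvalues along the lines of \cite[Theorem 1]{Davis2011}. The only difference is presentational: you sketch the truncation/centering estimates behind the off-diagonal operator-norm bound, which the paper simply imports from \cite{Davis2011}.
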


\noindent
The convergence in \eqref{ev_result} means that, for any function $f:(0,\infty)\rightarrow(0,\infty)$ with compact support,
\[ E\left(e^{-\sum_{i=1}^p f(a_{np}^{-2}\lambda_i)}\right) \ston{} E\left(e^{-\sum_{i=1}^\infty f(b^{2/\alpha}\Gamma_i^{-2/\alpha})}\right). \]

\begin{proof}
Proposition 3.3 of \cite{Davis2011} shows that 
\begin{align}
a_{np}^{-2} \norm{\bX\bX^\T-D}_2 \ston{P} 0.
\end{align}
This means that $\bX\bX^\T$ can be approximated by its diagonal. Using Weyl's inequality (\cite[Corollary III.2.6]{Bhatia1997}), its eigenvalues are therefore asymptotically equal to its diagonal entries.
By \cite[Proposition 3.21]{Resnick2008}, the large deviation result \eqref{largedeviation} implies that the point process of the diagonal entries of $\bX\bX^\T$ converges to a Poisson point process,
\begin{align}
\sum_{i=1}^p\eps_{a_{np}^{-2}\sum_{t=1}^n X_{it}^2} \ston{D} \sum_{i=1}^\infty\eps_{b^{2/\alpha}\Gamma_i^{-2/\alpha}}.
\end{align}
Along the lines of the proof of \cite[Theorem 1]{Davis2011} it follows that this results carries over to the eigenvalues, since, as mentioned before, they behave like the diagonal entries.
\end{proof}

\begin{remark}
Observe that \eqref{ev_result} immediately implies the joint convergence of the $k$-largest eigenvalues of $\bX\bX^\T$ in distribution. Denote by $\lambda_{(1)}\geq\ldots\geq\lambda_{(p)}\geq 0$ the eigenvalues of $\bX\bX^\T$ in decreasing order. Then we have, for any fixed integer $k$, that
\[ a_{np}^{-2} (\lambda_{(1)},\ldots,\lambda_{(k)}) \ston{D} b^{2/\alpha}(\Gamma_1^{-2/\alpha},\ldots,\Gamma_k^{-2/\alpha}). \]
If $b=0$, then the normalized eigenvalues converge to zero in probability.
\end{remark}

The large deviation condition \eqref{largedeviation} is essentially equivalent to the convergence of the point process of the partial sums of $X_t^2$  to a limiting point process. 
Instead of having a condition on the partial sums, it would be much more convenient in many cases to have a condition on the process itself.
Davis and Hsing \cite{Davis1995} give very general conditions under which the point process convergence of the sequence $X_t^2$ gives a large deviation result for the partial sums of $X_t^2$. This will be stated in our next theorem.

\begin{theorem}\label{generaltheorem2}
Assume that
\begin{align}\label{cluster}
\sum_{i=1}^n\eps_{a_{n}^{-2}X_{i}^2} \ston{D} \sum_{i=1}^\infty\sum_{j=1}^\infty \eps_{P_i Q_{ij}},
\end{align}
where $\sum_{i=1}^\infty\eps_{P_i}$ is a Poisson process on $(0,\infty)$ with intensity measure $\nu$, and $(\sum_{j=1}^\infty\eps_{Q_{ij}})_i$ is a sequence of iid point processes on $[-1,0)\cup(0,1]$ independent of $\sum_{i=1}^\infty\eps_{P_i}$.
Further assume that the sequence $(X_t)$ is strongly mixing.
If $p,n\to\infty$ such that \eqref{beta condition} is satisfied, then we have \eqref{ev_result} with
\begin{align}\label{eq b}
b=\lim_{\delta\to 0}\int_0^\infty P\left(\sum_{i=1}^\infty u Q_{1i} \1_{(\delta,\infty)}(u|Q_{1i}|)>1\right)\nu(du)\in[0,\infty).
\end{align}
\end{theorem}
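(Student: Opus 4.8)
The plan is to reduce the whole statement to \cref{generaltheorem} by verifying its large deviation hypothesis \eqref{largedeviation}; once that is in hand, the limit \eqref{ev_result} with the stated $b$ is immediate. Write $S_n=\sum_{t=1}^n X_t^2$ and note that $X_t^2$ is regularly varying with index $\alpha/2\in(0,1)$. The assumed point process convergence \eqref{cluster} is precisely a description of how the large values of $(X_t^2)$ cluster, with the Poisson atoms $P_i$ marking the cluster positions and $(Q_{ij})_j$ the within-cluster shapes. The first and main step is to feed \eqref{cluster}, together with the strong mixing assumption, into the partial sum theory of Davis and Hsing \cite{Davis1995} to obtain the single big jump large deviation relation
\[
\frac{P\left(S_n>s_n\right)}{n\,P\left(X_0^2>s_n\right)}\ston{}b
\]
for sequences $s_n$ with $s_n/a_n^2\to\infty$, where $b$ is the cluster constant in \eqref{eq b}.

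To extract this relation from \eqref{cluster} I would proceed by truncation, since the summation functional $\mu\mapsto\int u\,\mu(du)$ is not continuous on the space of point measures. For fixed $\delta>0$, retaining only the atoms exceeding $\delta$ in modulus yields a functional that is continuous, so the continuous mapping theorem applied to \eqref{cluster} identifies the limiting law of the $\delta$-truncated cluster sums and produces the $\delta$-truncated constant
\[
\int_0^\infty P\left(\sum_{i=1}^\infty u Q_{1i}\,\1_{(\delta,\infty)}(u|Q_{1i}|)>1\right)\nu(du).
\]
Strong mixing then furnishes the anti-clustering estimate showing that the atoms below $\delta$ contribute negligibly, uniformly in $n$; letting $\delta\to0$ recovers exactly the constant $b$ of \eqref{eq b}.

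It remains to transfer this to the scale $a_{np}^2$ appearing in \eqref{largedeviation}. Putting $s_n=a_{np}^2x$ we have $s_n/a_n^2=(a_{np}/a_n)^2x\to\infty$, so we are genuinely in the far tail regime where the large deviation applies, and hence, using the regular variation \eqref{reg var X} with index $np$ evaluated at $\sqrt{x}$,
\[
pP\left(S_n>a_{np}^2x\right)\sim b\,(np)\,P\left(|X_0|>a_{np}\sqrt{x}\right)\ston{}b\,x^{-\alpha/2}.
\]
This is \eqref{largedeviation}, and \cref{generaltheorem} now delivers \eqref{ev_result}.

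The crux of the argument is the first step. The delicate points are the uniform (in $n$) anti-clustering bound controlling the small atoms, and the range of thresholds over which the single big jump asymptotics stay valid. Both are governed by the mixing rate together with the growth of $p_n$, and this is exactly where the restriction $\beta<\frac{2-\alpha}{\alpha-1}$ for $1<\alpha<2$ is used: it keeps the scale $a_{np}^2$, equivalently $p_n\approx n^\beta$, inside the window in which $P(S_n>s)\sim b\,n\,P(X_0^2>s)$ holds, while also being what \cref{generaltheorem} itself requires. I would expect to quote the quantitative large deviation bounds that delimit this window from \cite{Davis2011} rather than re-derive them here.
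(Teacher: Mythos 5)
Your proposal is correct and follows essentially the same route as the paper: invoke Davis--Hsing \cite[Theorem 4.3]{Davis1995} to turn \eqref{cluster} together with strong mixing into the large deviation relation $P(\sum_{t=1}^n X_t^2>a_{np}^2x)/(nP(X_0^2>a_{np}^2x))\to b$, combine this with $pnP(X_0^2>a_{np}^2 x)\to x^{-\alpha/2}$ to verify \eqref{largedeviation}, and conclude via \cref{generaltheorem}. The truncation and anti-clustering sketch you outline is the internal mechanism of the cited Davis--Hsing theorem, which the paper simply quotes rather than re-derives.
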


\begin{proof}
Under the above conditions, \cite[Theorem 4.3]{Davis1995} shows that
\[ \lim_{n\to\infty}\frac{P\left(\sum_{t=1}^n X_t^2>a_{np}^2x\right)}{nP(X_0^2>a_{np}^2x)}=\lim_{\delta\to 0}\int_0^\infty P\left(\sum_{i=1}^\infty u Q_{1i} \1_{(\delta,\infty)}(u|Q_{1i}|)>1\right)\nu(du)=:b.
\]
Since $pnP(X_0^2>a_{np}^2 x)\to x^{-\alpha/2}$, this implies
\[ pP\left(\sum_{t=1}^n X_t^2>a_{np}^2 x\right) \ston{} b x^{-\alpha/2}. \]
An application of \cref{generaltheorem} completes the proof.
\end{proof}

\begin{remark}
The assumption that the sequence $(X_t)$ is strongly mixing can be replaced by the much weaker assumption \cite[(2.1)]{Davis1995}.
\end{remark}

In the remainder of this article we apply the results of this section to stochastic volatility and GARCH processes. For a stochastic volatility process the constant $b$ in \eqref{ev_result} is essentially one, see \eqref{sv like iid}. In the case of a GARCH process the characterization of $b$ is more involved.

\section{Stochastic Volatility Models}\label{sec sv model}


In this section we use the previously introduced techniques to obtain results for the eigenvalues of sample covariance matrices when the observations are given by stochastic volatility models. More precisely, the rows of the observation matrix $\bX$ are given by independent copies of a univariate stochastic volatility process $X_t=\sigma_t Z_t$, and $\lambda_1,\ldots,\lambda_p$ denote the eigenvalues of $\bX\bX^\T$.

\begin{theorem}\label{sv1}
Assume that $(Z_t)$ is an iid sequence with regularly varying tails with index $\alpha\in(0,2)$ and normalizing sequence $(a_n)$. This means we have that
\begin{align}\label{Z reg var}
nP(|Z_0|>a_{n}x) \ston{} x^{-\alpha} \quad\tn{for any }x>0.
\end{align}
Moreover we assume that $Z_0$ satisfies the tail balancing condition
\begin{align}\label{tail balancing}
\lim_{x\to\infty}\frac{P(Z_0>x)}{P(|Z_0|>x)}=q\in[0,1] \quad\tn{exists.}
\end{align}
Let $\sigma_t\geq 0$ be a stationary sequence with $E\sigma_0^{4\alpha}<\infty$ independent of $(Z_t)$. Then $X_t=\sigma_t Z_t$ defines a stochastic volatility process.
Suppose $p=p_n\ston{}\infty$ such that
\[ 0<\liminf_{n\to\infty}\frac{p_n}{n^{\beta_1}} \quad\tn{ and }\quad \limsup_{n\to\infty}\frac{p_n}{n^{\beta_2}}<\infty \]
for some $0<\beta_1<\beta_2$, where $\beta_2<\frac{2-\alpha}{\alpha-1}$ in case $1<\alpha<2$. 
Then we have, as $n\to\infty$, that
\begin{align}\label{ev_result sv}
\sum_{i=1}^p\eps_{a_{np}^{-2}\lambda_i} \sto{D}{} \sum_{i=1}^\infty\eps_{(E\sigma_0^\alpha)^{2/\alpha}\Gamma_i^{-2/\alpha}}\,,
\end{align}
where $\Gamma_i=E_1+\ldots+E_i$ is the successive sum of iid exponential random variables $E_k$ with mean one.
\end{theorem}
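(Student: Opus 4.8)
The plan is to verify the hypotheses of \cref{generaltheorem2} for the stochastic volatility process $X_t=\sigma_t Z_t$ and then read off the constant $b$ from \eqref{eq b}. First I would establish that $X_t$ is regularly varying with the same index $\alpha$ as $Z_t$. Since $(\sigma_t)$ is independent of $(Z_t)$ and $E\sigma_0^{4\alpha}<\infty$ (so in particular $\sigma_0$ has moments well beyond order $\alpha$), a Breiman-type argument gives $P(|X_0|>x)\sim E\sigma_0^\alpha\, P(|Z_0|>x)$ as $x\to\infty$, whence $X_0$ inherits regular variation with index $\alpha$ and satisfies the tail-balancing condition. The normalizing constants then satisfy $a_{n}(X)\sim(E\sigma_0^\alpha)^{1/\alpha}a_n(Z)$, and consequently for the squares $X_t^2$ the tail index is $\alpha/2$ with a corresponding scaling factor.

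Next I would identify the limiting point process in the cluster condition \eqref{cluster}. The key structural fact about stochastic volatility models is that, because $(\sigma_t)$ has lighter tails than $(Z_t)$, the extreme values of $X_t=\sigma_t Z_t$ exhibit no clustering: large values of $X_t^2$ arise from single large values of $Z_t$ modulated by $\sigma_t$, and distinct large values occur at well-separated times with probability tending to one. I would therefore argue that the point process $\sum_{i=1}^n\eps_{a_n^{-2}X_i^2}$ converges to a Poisson process on $(0,\infty)$, i.e. in the notation of \eqref{cluster} the cluster point processes $\sum_{j}\eps_{Q_{ij}}$ degenerate to a single unit point mass at $1$ (no nontrivial clusters). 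This is the point-process analogue of the classical result that stochastic volatility sequences have extremal index one. I would also record that $(X_t)$ is strongly mixing, which holds under mild assumptions on the volatility sequence (for instance if $(\sigma_t)$ itself is strongly mixing, since $(Z_t)$ is iid).

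With the limiting point process being simple Poisson, the constant $b$ in \eqref{eq b} collapses: when each cluster consists of a single point, the probability $P(\sum_i uQ_{1i}\1_{(\delta,\infty)}(u|Q_{1i}|)>1)$ reduces to $\1_{(1,\infty)}(u)$ in the relevant limit, and integrating against the intensity measure $\nu$ of the squared-tail Poisson process yields $b=1$ in the normalization where $a_n$ refers to $Z$. Accounting for the Breiman scaling factor that relates $a_{np}(X)$ to $a_{np}(Z)$ converts this into the stated scaling constant $(E\sigma_0^\alpha)^{2/\alpha}$ in \eqref{ev_result sv}; this is the content of the remark that $b$ is ``essentially one'' for stochastic volatility. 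An application of \cref{generaltheorem2} then delivers \eqref{ev_result sv}, with the two-sided growth condition $0<\liminf p_n/n^{\beta_1}$ and $\limsup p_n/n^{\beta_2}<\infty$ ensuring \eqref{beta condition} holds and that the relevant tail asymptotics transfer uniformly along the $a_{np}$ normalization.

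The main obstacle I expect is the rigorous verification of the no-clustering claim, that is, establishing \eqref{cluster} with a purely Poissonian limit. One must control the joint tail behaviour of $(X_t^2,X_{t+h}^2)$ and show that simultaneous large values at lagged times are negligible; this requires exploiting the multiplicative structure $X_t=\sigma_tZ_t$ together with the moment condition $E\sigma_0^{4\alpha}<\infty$, which is precisely strong enough to dominate the fourth moment of the relevant products (note the factor of $2$ from squaring and a further factor of $2$ from considering pairs). Handling the interplay between the anti-clustering estimate and the growth of $p_n$ along the $a_{np}$ normalization, rather than the fixed $a_n$ of \eqref{cluster}, is the delicate quantitative heart of the argument; the two-sided polynomial bound on $p_n$ is what makes this transfer uniform.
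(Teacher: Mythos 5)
There is a genuine gap, and it lies in the choice of route rather than in the bookkeeping. Your plan runs through \cref{generaltheorem2}, which requires strong mixing of $(X_t)$ and the cluster convergence \eqref{cluster} with a purely Poissonian limit. But the theorem assumes only that $(\sigma_t)$ is stationary with $E\sigma_0^{4\alpha}<\infty$ --- no mixing, not even ergodicity, is imposed on the volatility. Neither of your two intermediate claims follows from these hypotheses. Concretely, take $\sigma_t\equiv\sigma_0$ for all $t$, with $\sigma_0$ a non-degenerate random variable having finite moment of order $4\alpha$: this is a legitimate stationary volatility sequence under the hypotheses, yet $(X_t)=(\sigma_0 Z_t)$ is conditionally iid given $\sigma_0$ and is not strongly mixing, and the point process $\sum_{i=1}^n\eps_{a_n^{-2}X_i^2}$ converges to a \emph{Cox} process with random intensity proportional to $\sigma_0^{\alpha/2\cdot 2}=\sigma_0^\alpha$, not to a Poisson process, so \eqref{cluster} fails in the form you need. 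The theorem's conclusion \eqref{ev_result sv} nonetheless holds in this example, which shows the Poisson structure of the limit comes from independence \emph{across the $p$ rows} of $\bX$ (each row carrying its own volatility path), not from temporal asymptotic independence within a row; your ``extremal index one, hence Poisson limit in $t$'' step is both unprovable from the hypotheses and unnecessary for the result.

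The paper's proof avoids all of this by going through \cref{generaltheorem} directly, not \cref{generaltheorem2}: it cites the precise large deviation result of Mikosch and Wintenberger (Theorem 4.2 of \cite{Mikosch2012}) applied to $X_t^2=\sigma_t^2Z_t^2$, which gives $P\bigl(\sum_{t=1}^n X_t^2>a_{np}^2x\bigr)\sim nP(X_0^2>a_{np}^2x)$ without any mixing assumption, and then uses the Breiman-type asymptotic $pnP(X_0^2>a_{np}^2x)\to x^{-\alpha/2}E\sigma_0^\alpha$ (this part of your argument is correct) to verify \eqref{largedeviation} with $b=E\sigma_0^\alpha$, yielding the scaling constant $(E\sigma_0^\alpha)^{2/\alpha}$. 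Note also that you misattribute the role of the two-sided growth condition: the lower bound $0<\liminf p_n/n^{\beta_1}$ is needed precisely so that $a_{np}$ exceeds the threshold at which the Mikosch--Wintenberger large deviations are valid, not for a ``uniform transfer'' from $a_n$ to $a_{np}$; indeed, your \cref{generaltheorem2} route would not need the liminf at all. What you have sketched is essentially the proof of the \emph{next} theorem in the paper, which imposes mixing-type structure on $(\sigma_t)$ ($m$-dependence, or exponential of a Gaussian linear process) and in exchange drops the liminf condition --- under the hypotheses of \cref{sv1} as stated, your argument cannot be completed.
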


\begin{proof}
Theorem 4.2 of \cite{Mikosch2012} applied to $X_t^2=\sigma_t^2 Z_t^2$ gives that
\[ \lim_{n\to\infty}\frac{P\left(\sum_{t=1}^n X_t^2>a_{np}^2x\right)}{nP(X_0^2>a_{np}^2x)}=1.
\]
Since $pnP(X_0^2>a_{np}^2 x)\to x^{-\alpha/2} E\sigma_0^\alpha $, this implies that
\[ pP\left(\sum_{t=1}^n X_t^2>a_{np}^2\right) \ston{} x^{-\alpha/2} E\sigma_0^\alpha . \]
\cref{generaltheorem} concludes.
\end{proof}

\begin{remark}
Note that the factor $(E\sigma_0^\alpha)^{2/\alpha}$ in the limiting point process in \eqref{ev_result sv} appears due to the fact that $a_n$ is the normalizing sequence of $Z_1$ and not $X_1$. 
If $\tilde a_n$ denotes the normalizing sequence of $X_1$, then we have that
\begin{align}\label{sv like iid}
\sum_{i=1}^p\eps_{\tilde a_{np}^{-2}\lambda_i} \sto{D}{} \sum_{i=1}^\infty\eps_{\Gamma_i^{-2/\alpha}}.
\end{align}
Thus we get the same result for a stochastic volatility process as we get for an iid sequence. The intuitive reason for this is that the dependence within the sequence $(X_t)$ is inherited by the light-tailed sequence $(\sigma_t)$. Hence, the extremes of $(X_t)$ are asymptotically independent.
\end{remark}

A combination of \cite[Theorem 4.6]{Mikosch2012} and \cref{generaltheorem} can be used to obtain analogous results for regularly varying Markov chains.

The additional assumption of $0<\liminf_{n\to\infty}\frac{p_n}{n^{\beta_1}}$ in \cref{sv1} is needed to apply Theorem 4.2 of \cite{Mikosch2012}. We can eliminate this assumption at the expense of 
imposing mixing properties on the volatility sequence $(\sigma_t)$. This is the content of the following theorem.

\begin{theorem}
Let $X_t=\sigma_t Z_t$ be a stochastic volatility process such that $|Z_0|$ is regularly varying satisfying \eqref{Z reg var} and \eqref{tail balancing} with tail index $\alpha\in(0,2)$ and normalizing sequence $(a_n)$. Assume that the volatility sequence $(\sigma_t)$ is independent of $(Z_t)$ and either
\begin{itemize}
\item a stationary sequence of $m$-dependent non-negative random variables such that $E\sigma_0^{\alpha+\delta}<\infty$ for some $\delta>0$, or
\item the exponential of a linear process with Gaussian noise, i.e.,
\[ \log\sigma_t = \sum_{k=-\infty}^\infty\psi_k \xi_{t-k}, \]
where $(\xi_t)$ is a sequence of iid mean-zero Gaussian  random variables and $(\psi_k^2)$ is summable.
\end{itemize}
Suppose that $p,n\to\infty$ such that \eqref{beta condition} is satisfied.
Then we have \eqref{ev_result sv}.
\end{theorem}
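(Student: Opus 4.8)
The plan is to apply \cref{generaltheorem2} rather than to reproduce the argument of \cref{sv1}. The point is that \cref{generaltheorem2} derives the large deviation behaviour \eqref{largedeviation} from the point process convergence \eqref{cluster} together with a mixing hypothesis, and therefore never invokes Theorem 4.2 of \cite{Mikosch2012}; this is precisely what lets us drop the lower growth bound $0<\liminf_n p_n/n^{\beta_1}$ and keep only \eqref{beta condition}. Thus I would reduce the statement to checking the two hypotheses of \cref{generaltheorem2} for $X_t=\sigma_t Z_t$, namely the convergence \eqref{cluster} of the point process of the normalised squares and the strong mixing of $(X_t)$, and then to evaluating the constant \eqref{eq b}. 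The guiding principle, already visible in the remark after \cref{sv1}, is that the heavy tails of $X_t$ are produced solely by the iid driver $Z_t$ while the light-tailed factor $\sigma_t$ merely rescales them, so that the extremes of $(X_t)$ are asymptotically independent and the limiting clusters in \eqref{cluster} degenerate to single points.

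First I would record the marginal tails. Since $E\sigma_0^{\alpha+\delta}<\infty$ in the $m$-dependent case (and $\sigma_0$ has all moments in the log-Gaussian case, being lognormal), Breiman's theorem applies to $X_0^2=\sigma_0^2 Z_0^2$ and gives that $X_0^2$ is regularly varying with index $\alpha/2$ and $P(X_0^2>x)\sim E\sigma_0^\alpha\,P(Z_0^2>x)$; in particular $nP(a_n^{-2}X_0^2>u)\to E\sigma_0^\alpha\,u^{-\alpha/2}=:\nu((u,\infty))$. Next I would establish \eqref{cluster} with this Poisson intensity $\nu$ and the degenerate cluster law $\sum_j\eps_{Q_{1j}}=\eps_1$. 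The decisive step is an anticlustering condition: because the $Z_t$ are independent, a single large value $Z_t$ makes only $X_t$ large and leaves the neighbouring $X_{t\pm h}$ governed by independent copies, so no two normalised squares within a fixed window can exceed a level simultaneously in the limit. I would verify this, and the resulting point process convergence, separately for the two structures on $(\sigma_t)$, using the point process theory for stochastic volatility models of Davis and Mikosch; the $m$-dependence, respectively the square-summability of $(\psi_k)$ with Gaussian noise, supplies exactly the decay of dependence these results require. With $Q_{11}=1$ and $Q_{1j}=0$ for $j\ge2$, formula \eqref{eq b} collapses to $b=\nu((1,\infty))=E\sigma_0^\alpha$.

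It then remains to verify strong mixing. In the $m$-dependent case the product $X_t=\sigma_t Z_t$ is itself $m$-dependent, because $(Z_t)$ is iid and independent of $(\sigma_t)$, so $(X_t)$ is strongly mixing with mixing coefficients vanishing for lags exceeding $m$. In the log-Gaussian case $(\log\sigma_t)$ is a Gaussian linear process, which is strongly mixing under the stated summability of $(\psi_k)$ together with the usual regularity of its spectral density; strong mixing is preserved by the map $x\mapsto e^{x}$ and by multiplication with the independent iid sequence $(Z_t)$, so $(X_t)$ is strongly mixing as well. \cref{generaltheorem2} with $b=E\sigma_0^\alpha$ then yields \eqref{ev_result sv}, the factor $(E\sigma_0^\alpha)^{2/\alpha}$ appearing exactly as explained in the remark after \cref{sv1}, since $a_n$ normalises $Z_0$ rather than $X_0$. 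The main obstacle I anticipate is not the evaluation of $b$ but the simultaneous verification, for each of the two volatility structures, that the clusters in \eqref{cluster} are genuinely single points and that $(X_t)$ is strongly mixing; the log-Gaussian case is the delicate one, since strong mixing of a Gaussian linear process is a subtle property that square-summability of $(\psi_k)$ alone does not guarantee without additional spectral assumptions.
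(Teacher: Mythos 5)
Your proposal follows essentially the same route as the paper's own proof: invoke the Davis--Mikosch point process results for stochastic volatility models to obtain \eqref{cluster} with Poisson intensity $\nu((u,\infty))=E\sigma_0^\alpha u^{-\alpha/2}$ and degenerate single-point clusters ($Q_{11}=1$, $Q_{1j}=0$ for $j\geq 2$), verify strong mixing of $(X_t)$ from that of $(\sigma_t)$, and apply \cref{generaltheorem2} to get $b=E\sigma_0^\alpha$. Your closing caveat about strong mixing of the log-Gaussian volatility is well taken --- the paper simply asserts mixing for both cases without addressing the spectral subtlety you identify --- but this does not change the fact that the argument is the same.
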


\begin{proof}
Depending on the choice of the volatility process $(\sigma_t)$, Theorem 3.1. or Theorem 3.3. of Davis and Mikosch \cite{Davis2001} yield that 
\[ \sum_{i=1}^\infty \eps_{a_n^{-1}X_i} \ston{D} \sum_{i=1}^\infty \eps_{\tilde P_i}, \]
where the limiting point process is Poisson with intensity measure 
\[ \tilde\nu(dx)=\alpha E\sigma_0^\alpha [ q x^{-\alpha-1} \1_{(0,\infty)}(x) + (1-q) (-x)^{-\alpha-1} \1_{(-\infty,0)}(x)]dx.
\]
An application of the continuous mapping theorem yields that
\[ \sum_{i=1}^\infty \eps_{a_n^{-2}X_i^2} \ston{D} \sum_{i=1}^\infty \eps_{\tilde P_i^2} = \sum_{i=1}^\infty \eps_{P_i}. \]
The limiting point process $\sum_{i=1}^\infty \eps_{P_i}$ of the squares has intensity measure 
\[ \nu(dx)=\alpha E\sigma_0^\alpha x^{-\alpha-1} \1_{(0,\infty)}(x)dx\]
and representation \eqref{cluster} with $Q_{ij}=1$ if $i=j$ and zero otherwise. For either choice of $(\sigma_t)$, this sequence is strongly mixing. Davis and Mikosch \cite{Davis2001} have shown that $(X_t)$ is strongly mixing if $(\sigma_t)$ is.
Thus we can apply \cref{generaltheorem2}. Due to the simple structure of $Q$, we have that
\[ b=\lim_{\delta\to 0}\int_0^\infty P\left(\sum_{i=1}^\infty u Q_{1i} \1_{(\delta,\infty)}(u|Q_{1i}|)>1\right)\nu(du)
=- \frac{\alpha}{2} E\sigma_0^\alpha \int_1^\infty u^{-\alpha/2-1} du
=E\sigma_0^\alpha.
\]
\end{proof}

\section{GARCH processes}\label{sec garch}

A GARCH$(p,q)$ process $(X_t)$ with $p\geq 1$ or $q\geq 1$ is given by the equations 
\begin{align}\label{eq garch}
X_t =& \sigma_t Z_t, \\
\sigma_t^2 =& \alpha_0 + \sum_{i=1}^p\alpha_i X_{t-i}^2 + \sum_{j=1}^q \beta_j \sigma_{t-j}^2, \nonumber
\end{align}
where $\alpha_i$ and $\beta_j$ are non-negative coefficients such that $\alpha_0>0$, $\alpha_p>0$ if $p\geq 1$ and $\beta_q>0$ if $q\geq 1$, and $(Z_t)$ is an iid sequence. 
We will make use of the results of \cref{sec general result} to obtain the point process convergence of the eigenvalues of $\bX\bX^\T$ when the rows of $\bX$ are given by iid copies of a GARCH process.

\begin{theorem}
Suppose that $(Z_t)$ is an iid sequence satisfying the following conditions:
\begin{itemize}
\item $Z_1$ is symmetric, i.e., $Z_1\stackrel{D}{=}-Z_1$, and has a strictly positive density on $\R$.
\item There exists an $h_0\in(0,\infty]$ such that $E|Z_1|^h<\infty$ for $h<h_0$ and $E|Z_1|^{h_0}=\infty$.
\end{itemize}
Assume there exists a unique strictly stationary solution $(X_t)$ to equation \eqref{eq garch}.
{Then $X_1$ is regularly varying with index $\alpha>0$ and some normalizing sequence $(a_n)$}. Suppose that $\alpha<2$.\\ 
Then we have, as $p,n\to\infty$ such that \eqref{beta condition} is satisfied, that the convergence in  \eqref{ev_result} holds with $b$ as given in \eqref{eq b}.\\
The intensity measure $\nu$ in \eqref{eq b} is given by $\nu(x,\infty)=\gamma x^{-\alpha}$ for some $\gamma\in(0,1]$, and $(Q_{ij})$ satisfies $\sup_{j}|Q_{ij}|=1$ for all $i\geq 1$. The distribution of $(Q_{ij})$ is described in detail in Theorem 2.8 of Davis and Mikosch \cite{Davis1998}.
\end{theorem}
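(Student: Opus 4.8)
The plan is to reduce the statement to \cref{generaltheorem2}. The theorem asks us to verify, for a GARCH process, the hypotheses \eqref{cluster} (point process convergence of the squared sequence to a cluster Poisson process) together with strong mixing, and then to identify the intensity measure $\nu$ and the cluster structure $(Q_{ij})$. First I would recall from Davis and Mikosch \cite{Davis1998} that a GARCH process driven by an iid symmetric noise with a regularly varying tail is itself strictly stationary and regularly varying with some index $\alpha>0$; this is exactly the assertion quoted in the theorem statement, so I would cite it rather than reprove it. The standing hypotheses (symmetry of $Z_1$, strictly positive density, and the moment threshold $h_0$) are precisely the conditions under which \cite{Davis1998} establishes both the regular variation of $X_1$ and the point process convergence of $(X_t)$.

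The main verification step is the point process convergence \eqref{cluster}. I would apply the relevant theorem of Davis and Mikosch (their Theorem 2.8, referenced in the statement) to obtain the convergence
\[ \sum_{i=1}^n\eps_{a_n^{-1}X_i} \ston{D} \sum_{i=1}^\infty\sum_{j=1}^\infty \eps_{\tilde P_i \tilde Q_{ij}}, \]
where the outer Poisson process has an intensity reflecting the tail index $\alpha$ and the inner iid point processes $(\sum_j \eps_{\tilde Q_{ij}})_i$ encode the extremal clustering that is characteristic of GARCH, normalized so that $\sup_j|\tilde Q_{ij}|=1$. The reason clustering appears here, unlike in the stochastic volatility case where $Q_{ij}=\1_{\{i=j\}}$, is that extreme values of a GARCH process arrive in dependent bursts. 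Then, exactly as in the proof of the preceding theorem, I would push this convergence through the continuous map $x\mapsto x^2$ to obtain \eqref{cluster} for the squared sequence, with intensity $\nu$ on $(0,\infty)$ satisfying $\nu(x,\infty)=\gamma x^{-\alpha}$ and with the induced cluster marks $Q_{ij}$ obtained from the $\tilde Q_{ij}$. The constant $\gamma\in(0,1]$ is the extremal index contribution and $\gamma<1$ precisely reflects the nontrivial clustering.

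Next I would dispatch the strong mixing requirement: it is a standard fact, established in \cite{Davis1998} (and used in the $m$-dependent / log-linear case above), that a stationary GARCH process driven by noise with positive density is strongly mixing with geometric rate, since it is a function of a geometrically ergodic Markov chain. With \eqref{cluster} and strong mixing in hand, and with $p,n\to\infty$ under \eqref{beta condition}, \cref{generaltheorem2} applies directly and yields \eqref{ev_result} with $b$ given by the formula \eqref{eq b}. The hardest part of the argument is not any step I would carry out here but rather the underlying input from \cite{Davis1998}: determining the precise law of the cluster process $(Q_{ij})$ for a GARCH process requires analyzing the tail behaviour of the stationary solution via the theory of Kesten and the associated stochastic recurrence equation, which is genuinely involved. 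Since the theorem only asserts that this distribution is \emph{described} in \cite{Davis1998} and does not require an explicit closed form, I would quote it and restrict my own work to the routine verification that the cited point process convergence, after squaring, matches the template \eqref{cluster} and that the mixing hypothesis of \cref{generaltheorem2} is met.
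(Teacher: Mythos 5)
Your proposal follows essentially the same route as the paper: both reduce the theorem to \cref{generaltheorem2} by quoting known GARCH results for regular variation of $X_1$, point process convergence with the cluster representation \eqref{cluster}, and strong mixing, the only real difference being that the paper sources all three ingredients from Basrak, Davis and Mikosch \cite{Basrak2002} (Corollary 3.5 and Theorem 2.10, which in turn rest on \cite{Davis1998}) for general GARCH$(p,q)$, while you cite \cite{Davis1998} directly and make the squaring step via the continuous mapping theorem explicit. One caution: your gloss that the GARCH process is ``driven by an iid symmetric noise with a regularly varying tail'' misstates the hypotheses --- $Z_1$ need only satisfy the moment condition at the threshold $h_0$ (it may even be Gaussian), and the regular variation of $X_1$ arises from Kesten's theory for the associated stochastic recurrence equation, as you yourself correctly note later in the proposal.
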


\begin{proof}
By Basrak, Davis and Mikosch \cite[Corollary 3.5]{Basrak2002} we obtain that $(X_t)$ is strongly mixing and has regularly varying tail probabilities with index $\alpha>0$. Since we assume that $\alpha<2$, we can apply \cref{generaltheorem2}. In \cite[Theorem 2.10]{Basrak2002} it is shown, using the results from \cite{Davis1998}, that the point process $\sum_{i=1}^n\eps_{a_n^{-2}X_i^2}$ has a limiting point process with representation \eqref{cluster}. For details, see also the proof of \cite[Theorem 3.6]{Basrak2002}.
\end{proof}

\begin{remark}
\begin{enumerate}
\item Note that $\gamma\in(0,1]$ is the extremal index of the sequence $(|X_t|)$. For a definition of the extremal index, see for instance \cite{Andersen2009}.
\item In the case when $(X_t)$ is a GARCH$(1,1)$ process, the theorem simplifies considerably. If $\alpha_0,\alpha_1,\beta_1>0$, then $(X_t)$ has a unique strictly stationary solution if and only if
\[ -\infty < E\log(\alpha_1 Z_1^2+\beta_1) < 0, \]
cf.\! \cite[p.47]{Andersen2009}. Let us further assume that $EZ_1=0$ and $EZ_1^2=1$. In this case, the index $\alpha$ of regular variation of $X_1$ is given by the unique solution to the equation 
\[ E[(\alpha_1 Z_1^2+\beta_1)^\alpha]=1.\] 
\end{enumerate}
\end{remark}

To finish this section we want to mention that, using the results from \cite{Basrak2002} and \cite{Davis1998}, the same sort of argument could be applied to the class of solutions of stochastic recurrence equations. 
%

\section{The largest eigenvalue in the case when $p\geq n$}\label{sec sv model 2}

In the previous sections we assumed that \eqref{beta condition} is satisfied for some $\beta>0$ with $\beta<\frac{2-\alpha}{\alpha-1}$ in the case $1<\alpha<2$. This implies for $\alpha\in(1.5,2)$ that $p$ grows slower than $n$. For example, in statistical genetics one is often confronted with the case that the dimension of the data $p$ (e.g., the number of genes) is much larger than the sample size $n$ (e.g., the number of subjects). This may also happen in finance when one considers the problem of optimizing a large portfolio. 
In the next results we consider the case when $p\geq n$. In particular, if there is no extremal clustering, then 
 the largest eigenvalue $\lambda_{\max}$ of $\bX\bX^\T$
is asymptotically equal to the maximum of the squares of the entries of $\bX$.

\begin{theorem}
Let $(X_t)$ be a strictly stationary stochastic process which satisfies \eqref{reg var X} with normalizing sequence $(a_n)$ and tail index $\alpha\in(0,2)$. Assume that $(|X_t|)$ has extremal index equal to one.
Further let $p=p_n=n^\kappa$ for some $\kappa\geq 1$. 
Suppose, for any $x>0$,  that
\[ \lim_{n\to\infty}\frac{P(\sum_{t=1}^n|X_t|>a_{np}x)}{nP(|X_0|>a_{np}x)}=1, \]
and
\[ \lim_{n\to\infty}\frac{P(\sum_{t=1}^n X_t^2>a_{np}^2 x)}{nP(X_0^2>a_{np}^2 x)}=1. \]
Assume the rows of $\bX$ are given by independent copies of $(X_t)$, and denote by $\lambda_{\max}$ the largest eigenvalue of $\bX\bX^\T$. Then we have
\begin{align}\label{p larger than n}
\frac{\lambda_{\max}}{\max_{1\leq i\leq p,1\leq t\leq n} X_{it}^2} \ston{P} 1.
\end{align}
In particular, this implies that
\[ \lim_{n\to\infty} P(\lambda_{\max}\leq a_{np}^2 x) = \exp(-x^{-\alpha/2}). \]
\end{theorem}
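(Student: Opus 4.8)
The plan is to establish the ratio convergence \eqref{p larger than n} by sandwiching $\lambda_{\max}$ between two quantities, both of which are asymptotically equivalent to the maximal squared entry $M_{np}:=\max_{i,t} X_{it}^2$. First I would note the trivial lower bound: since $\lambda_{\max}$ is the largest eigenvalue of $\bX\bX^\T$ and the diagonal entries $\sum_t X_{it}^2$ are the quadratic forms $e_i^\T\bX\bX^\T e_i$, we have $\lambda_{\max}\geq \max_i \sum_t X_{it}^2 \geq M_{np}$, so that $\lambda_{\max}/M_{np}\geq 1$ deterministically. The whole content is therefore the matching upper bound $\lambda_{\max}\leq (1+o_P(1))M_{np}$.

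For the upper bound I would use the operator-norm submultiplicativity together with the structure $\bX\bX^\T$. A clean route is to bound $\lambda_{\max}=\norm{\bX\bX^\T}_2 \leq \norm{\bX}_2^2 \leq \norm{\bX}_1\norm{\bX}_\infty$, where $\norm{\bX}_1=\max_t\sum_i|X_{it}|$ is the maximum absolute column sum and $\norm{\bX}_\infty=\max_i\sum_t|X_{it}|$ is the maximum absolute row sum. The two hypotheses on the partial sums of $|X_t|$ and of $X_t^2$, combined with the single-big-jump heuristic that underlies extremal index one, are precisely what control these two quantities. First I would argue that, after normalizing by $a_{np}^2$, both $a_{np}^{-2}\norm{\bX}_1\norm{\bX}_\infty$ and $a_{np}^{-2}M_{np}$ converge in distribution to the same Fr\'echet-type limit $\exp(-x^{-\alpha/2})$; under extremal index one and the two large-deviation-type ratio assumptions, the maximum absolute row/column sums are dominated asymptotically by their single largest entry, so each sum behaves like its maximal term. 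This forces the ratio $\lambda_{\max}/M_{np}$ into the interval $[1,1+o_P(1)]$.

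The main obstacle will be making the upper bound tight rather than merely of the right order: the crude bound $\norm{\bX}_2^2\leq\norm{\bX}_1\norm{\bX}_\infty$ typically loses a constant factor, so I must instead show that asymptotically the $\ell_1$ and $\ell_\infty$ norms are each carried by the \emph{same} single dominant entry, which is exactly the row/column containing the global maximum $M_{np}$. The key step is therefore a one-big-jump argument: conditional on where the maximum occurs, the extremal index one assumption guarantees that no clustering inflates the relevant row and column sums, so that $\norm{\bX}_1 = (1+o_P(1))\sqrt{M_{np}}$ and $\norm{\bX}_\infty=(1+o_P(1))\sqrt{M_{np}}$ with the dominant contributions aligned. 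I would make this rigorous by fixing $x>0$ and showing $P(\lambda_{\max}>a_{np}^2 x(1+\eta))\to P(M_{np}>a_{np}^2 x)$ for every $\eta>0$, using the two ratio hypotheses to replace sums by maxima within each row and each column, and the independence of the rows together with \eqref{reg var X} to compute $P(M_{np}\leq a_{np}^2 x)\to\exp(-x^{-\alpha/2})$.

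Once the ratio convergence \eqref{p larger than n} is in hand, the final limiting law follows immediately: by \eqref{reg var X} and the independence of the $p$ rows (hence of all $pn$ entries after conditioning appropriately on the within-row dependence, which is negligible for the maximum under extremal index one), $P(a_{np}^{-2}M_{np}\leq x)=P(\bigcap_{i,t}\{X_{it}^2\leq a_{np}^2 x\})\to\exp(-x^{-\alpha/2})$, and Slutsky's theorem applied to the product $\lambda_{\max}=(\lambda_{\max}/M_{np})\cdot M_{np}$ transfers this limit to $\lambda_{\max}$, giving $\lim_{n\to\infty}P(\lambda_{\max}\leq a_{np}^2 x)=\exp(-x^{-\alpha/2})$.
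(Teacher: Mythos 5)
Your lower bound and your identification of the Fr\'echet limit for $M_{np}=\max_{i,t}X_{it}^2$ match the paper, but your upper bound contains a genuine gap: the claim that the maximal column sum satisfies $\norm{\bX}_1=\max_t\sum_{i=1}^p|X_{it}|=(1+o_P(1))\sqrt{M_{np}}$ is false on part of the allowed parameter range, and no one-big-jump or alignment argument can repair it. Each column consists of $p$ \emph{independent} entries, so for $\alpha\in(1,2)$ the law of large numbers gives $\sum_{i=1}^p|X_{it}|\approx pE|X_0|$ for every fixed $t$. Since $a_{np}=(np)^{1/\alpha}\ell(np)$ with $\ell$ slowly varying and $p=n^\kappa$, one has $pE|X_0|/a_{np}\to\infty$ as soon as $\kappa(\alpha-1)>1$, which is permitted because $\kappa\geq 1$ is arbitrary (take, e.g., $\alpha=3/2$, $\kappa=3$). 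In that regime $\norm{\bX}_1$ is carried by the bulk (mean) contribution of the column, not by its largest entry, so
\begin{align*}
\frac{\norm{\bX}_1\norm{\bX}_\infty}{M_{np}} \ston{P} \infty,
\end{align*}
and the Schur-type bound $\lambda_{\max}\leq\norm{\bX}_1\norm{\bX}_\infty$ overshoots by a \emph{diverging} factor rather than the constant factor you anticipate. The extremal-index-one assumption cannot help here: it concerns temporal dependence within a row, whereas the obstruction is a cross-row law-of-large-numbers effect; note also that the theorem's hypotheses provide large-deviation control only for row sums, which is a structural hint that column sums should never enter the argument. (Even for $\alpha<1$, where single-jump domination of column sums does hold, you would need a Nagaev-type large-deviation input for iid column sums, uniform over the $n$ columns, that is not among the hypotheses.)

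The paper avoids columns entirely. It uses the sandwich $\max_i\sum_{t=1}^n X_{it}^2\leq\lambda_{\max}\leq\norm{X}_\infty^2$ with $\norm{X}_\infty=\max_i\sum_t|X_{it}|$ the maximal \emph{row} sum (imported from the proof of Theorem 2 of \cite{Davis2011}), and then shows both ends are asymptotically $M_{np}$. The mechanism is a joint large-deviation computation: extremal index one gives $P(\max_{t\leq n}|X_t|\leq u_n)\sim P(|X_1|\leq u_n)^n$, which combined with the ratio hypothesis yields $P\left(\sum_{t=1}^n|X_t|>a_{np}x,\,\max_{t\leq n}|X_t|>a_{np}y\right)\sim nP(|X_0|>a_{np}\max\{x,y\})$; summing over the $p$ iid rows, the bivariate point process of (row sum, row maximum) converges to a Poisson process concentrated on the diagonal $\Gamma_i^{-1/\alpha}(1,1)$, which is exactly \eqref{needed1}, and the same argument for the squares gives \eqref{needed2}. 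Your remaining ingredients (row sums asymptotically equal to row maxima, and the Fr\'echet law for $M_{np}$ plus Slutsky) coincide with the paper's; repairing your proof amounts to discarding the $\norm{\bX}_1\norm{\bX}_\infty$ bound in favour of this row-sum-only upper bound.
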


\begin{proof}
Recall that, since $(|X_t|)$ has extremal index equal one, we have that 
\[ P\left(\max_{1\leq t\leq n}|X_t|\leq u_n\right)\sim P\left(|X_1|\leq u_n\right)^n, \quad \textnormal{as } n\to\infty,\]
 for any deterministic sequence $u_n\to\infty$.
Thus we obtain with $\max_{1\leq t\leq n}|X_t|\leq\sum_{t=1}^n|X_t|$ that, as $n\to\infty$,
\begin{align*}
1-o(1)=& \frac{P(\max_{1\leq t\leq n}|X_t|>a_{np} \max\{x,y\})}{nP(|X_0|>a_{np}\max\{x,y\})} \\
\leq& \frac{P(\sum_{t=1}^n|X_t|>a_{np}x,\max_{1\leq t\leq n}|X_t|>a_{np}y)}{nP(|X_0|>a_{np}\max\{x,y\})}\leq \frac{P(\sum_{t=1}^n|X_t|>a_{np}\max\{x,y\})}{nP(|X_0|>a_{np}\max\{x,y\})}\sim 1.
\end{align*}
The latter converges to one by assumption, thus
\[ \frac{pP(\sum_{t=1}^n|X_t|>a_{np}x,\max_{1\leq t\leq n}|X_t|>a_{np}y)}{npP(|X_0|>a_{np}\max\{x,y\})}\to 1. \]
Hence
\[ \sum_{i=1}^p \eps_{a_{np}^{-1}(\sum_{t=1}^n|X_{it}|,\max_{1\leq t\leq n}|X_{it}|)} \ston{D} \sum_{i=1}^\infty \eps_{\Gamma_i^{-1/\alpha}(1,1)}. \]
This yields
\begin{align}\label{needed1}
\frac{\norm{X}_\infty}{\max_{i,t}|X_{it}|} = \frac{\max_i \sum_{t=1}^n|X_{it}|}{\max_{i,t}|X_{it}|}\ston{P} 1,
\end{align}
where $\norm{X}_\infty=\max_i\sum_t |X_{it}|$. Likewise, we obtain for the squares
\begin{align}\label{needed2}
\frac{\max_i \sum_{t=1}^n X_{it}^2}{\max_{i,t} X_{it}^2}\ston{P} 1.
\end{align}
Now we proceed like in the proof of \cite[Theorem 2]{Davis2011}. Since $\max_i \sum_{t=1}^n X_{it}^2\leq\lambda_{\max}\leq\norm{X}_\infty^2$, we obtain by \eqref{needed1} and \eqref{needed2} that
\[ \frac{\lambda_{\max}}{\max_{1\leq i\leq p}\sum_{t=1}^n X_{it}^2} \ston{P} 1. \]
\end{proof}

The above theorem applies to stochastic volatility models, as we show in the upcoming corollary. It should not be mistaken as a special case of \cref{sv1} since the growth condition on $p_n$ is different.

\begin{corollary}
Let $X_t=\sigma_t Z_t$ such that $(Z_t)$ is an iid sequence with regularly varying tails satisfying \eqref{Z reg var} and \eqref{tail balancing} with index $0<\alpha<2$ and normalizing sequence $(a_n)$.
Assume that $\sigma_t\geq 0$ is a stationary sequence independent of $(Z_t)$ that is strongly mixing with rate function $r_j=O(j^{-a})$ with $a>1$. Further assume that $E\sigma_0^{4\alpha+\delta}<\infty$ for some $\delta>0$. 
Let $n\to\infty$ and suppose that $p=p_n=n^\kappa$ for some $\kappa\geq 1$.
Then we have \eqref{p larger than n}. This implies that
\[ \lim_{n\to\infty} P\left(\frac{\lambda_{\max}}{a_{np}^2(E\sigma_0^\alpha)^{2/\alpha}}\leq x\right) = \exp(-x^{-\alpha/2}). \]
\end{corollary}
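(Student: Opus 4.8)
The plan is to obtain the corollary directly from the preceding theorem by verifying its three hypotheses for the stochastic volatility model $X_t=\sigma_t Z_t$, and then converting its conclusion to the normalisation stated here. First I would note that $(X_t)$ is strictly stationary, being the product of the stationary sequence $(\sigma_t)$ and the independent iid sequence $(Z_t)$, and that it is regularly varying with index $\alpha$. The latter is a Breiman-type argument: since $|Z_0|$ satisfies \eqref{Z reg var} and $E\sigma_0^{\alpha+\delta}<\infty$ (which follows from $E\sigma_0^{4\alpha+\delta}<\infty$), we get $P(|X_0|>x)\sim E\sigma_0^\alpha\,P(|Z_0|>x)$, so that $npP(|X_0|>a_{np}x)\to E\sigma_0^\alpha\,x^{-\alpha}$. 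Writing $\tilde a_n$ for the normalising sequence of $X_0$ itself, this yields $\tilde a_n\sim(E\sigma_0^\alpha)^{1/\alpha}a_n$, and it is with respect to $\tilde a_n$ that \eqref{reg var X} holds; the conversion between $a_n$ and $\tilde a_n$ is exactly what will produce the factor $(E\sigma_0^\alpha)^{2/\alpha}$ at the end.

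Next I would check that $(|X_t|)$ has extremal index equal to one. Heuristically, the heavy tails of $(X_t)$ are carried entirely by the iid noise $(Z_t)$, while the light-tailed, mixing volatility $(\sigma_t)$ cannot create clusters of extremes. To make this precise I would invoke the point process results of Davis and Mikosch \cite{Davis2001}: under the assumed strong mixing of $(\sigma_t)$ with rate $r_j=O(j^{-a})$, $a>1$, and the moment bound, one has $\sum_i\eps_{a_n^{-1}X_i}\ston{D}\sum_i\eps_{\tilde P_i}$ with a Poisson limit, i.e. clusters of size one, whence $(|X_t|)$ has extremal index one. The same reference shows that $(X_t)$ inherits strong mixing from $(\sigma_t)$, which will be used in the next step.

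The remaining two large deviation conditions, $P(\sum_{t=1}^n|X_t|>a_{np}x)\sim nP(|X_0|>a_{np}x)$ and $P(\sum_{t=1}^n X_t^2>a_{np}^2x)\sim nP(X_0^2>a_{np}^2x)$ at the high threshold $a_{np}$ (equivalently $\tilde a_{np}$, which differs only by a constant absorbed into $x$), are where the real work lies, and I expect this to be the main obstacle. The difficulty is that $p=n^\kappa$ with $\kappa\geq 1$ forces the threshold $a_{np}=a_{n^{1+\kappa}}$ to be very high, so the large deviation regime exploited in \cref{sv1} (which needed $\beta<\frac{2-\alpha}{\alpha-1}$) is unavailable. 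This is precisely what the strengthened hypotheses are designed for: the polynomial mixing rate $r_j=O(j^{-a})$ controls the dependence carried by $(\sigma_t)$, while $E\sigma_0^{4\alpha+\delta}<\infty$ gives moments of $\sigma_0^2$ of order $2\alpha+\delta/2$, comfortably beyond the tail index $\alpha/2$ of $X_0^2$. Together these let me invoke a precise single-big-jump large deviation result for the strongly mixing, regularly varying sequences $(|X_t|)$ and $(X_t^2)$ valid up to this high level; because the extremes do not cluster, the associated cluster constant equals one and the ratios tend to one. I would run the two cases in parallel, treating $(X_t^2)$ (summands regularly varying of index $\alpha/2\in(0,1)$) and $(|X_t|)$ (index $\alpha$) by the same estimates, the former being the more delicate because its index lies below one.

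With all hypotheses verified, the preceding theorem yields \eqref{p larger than n}, that is $\lambda_{\max}/\max_{i,t}X_{it}^2\ston{P}1$, together with the Fr\'echet-type limit $P(\lambda_{\max}\le\tilde a_{np}^2 x)\to\exp(-x^{-\alpha/2})$. Substituting $\tilde a_{np}^2\sim(E\sigma_0^\alpha)^{2/\alpha}a_{np}^2$ and renaming the argument turns this into $P\big(\lambda_{\max}/(a_{np}^2(E\sigma_0^\alpha)^{2/\alpha})\le x\big)\to\exp(-x^{-\alpha/2})$, which is the asserted statement.
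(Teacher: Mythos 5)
Your overall strategy is the paper's own: verify the hypotheses of the preceding theorem for $X_t=\sigma_t Z_t$ and convert normalizations via Breiman, so that $\tilde a_{np}\sim(E\sigma_0^\alpha)^{1/\alpha}a_{np}$ produces the factor $(E\sigma_0^\alpha)^{2/\alpha}$. Your Breiman step and your extremal-index step (via Davis and Mikosch \cite{Davis2001}) are fine --- indeed you are more explicit about the extremal index than the paper, which leaves it implicit. But at the step you yourself flag as ``where the real work lies'' your proposal has a genuine gap. The paper does not construct a new estimate there: it cites \cite[Theorem 4.2]{Mikosch2012}, a precise large deviation result for stochastic volatility processes whose hypotheses (strong mixing of $(\sigma_t)$ with rate $r_j=O(j^{-a})$, $a>1$, and the moment bound on $\sigma_0$) are exactly the assumptions of the corollary, noting that the threshold $a_{np}$, of order $n^{(1+\kappa)/\alpha}$ since $p=n^\kappa$, lies in the admissible range. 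Your ``invoke a precise single-big-jump large deviation result'' is a placeholder for precisely such a citation, which is acceptable in a sketch; what is not acceptable is that you miss the one genuinely non-automatic point: for $1\leq\alpha<2$ the available result is for \emph{centered} sums, so the uncentered ratio $P\left(\sum_{t=1}^n|X_t|>a_{np}x\right)/\left(nP(|X_0|>a_{np}x)\right)\to 1$ does not follow directly. The paper closes this by observing that $nE|X_0|/a_{np}\to 0$, which is where the hypotheses $\kappa\geq 1$ and $\alpha<2$ enter (they force $(1+\kappa)/\alpha>1$). Your sketch contains no such step, and without it the case $1\leq\alpha<2$ remains open. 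Relatedly, you have the difficulty ordering backwards: the squares $(X_t^2)$, with index $\alpha/2\in(0,1)$, are the \emph{easy} case, since for index below one the large deviation result holds for the raw, uncentered sums; the delicate case is $(|X_t|)$ with $\alpha\in[1,2)$.

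A further economy in the paper is worth noting: since $X_t^2=\sigma_t^2Z_t^2$ is again a stochastic volatility process (volatility $\sigma_t^2$, iid noise $Z_t^2$, tail index $\alpha/2<1$), a single application of the same theorem to the squared process disposes of the second large deviation condition, with the moment requirement for $\sigma_0^2$ covered by $E\sigma_0^{4\alpha+\delta}<\infty$ --- which is what makes that seemingly strong moment assumption natural. Your plan to run the two cases ``in parallel by the same estimates'' would also work in principle, but only after the centering repair above, and it obscures why the hypothesis is $E\sigma_0^{4\alpha+\delta}<\infty$ rather than a lower moment.
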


\begin{proof}
Since the square of a stochastic volatility process is again a stochastic volatility process, it suffices to show that
\[ \lim_{n\to\infty}\frac{P(\sum_{t=1}^n|X_t|>a_{np}x)}{nP(|X_0|>a_{np}x)}=1. \]
For $0<\alpha<1$ this was established in \cite[Theorem 4.2]{Mikosch2012} (note that $a_{np}=n^{\epsilon+1/\alpha}$ since $p=n^\kappa$). For $1\leq\alpha<2$ this has been shown for the centered sum. However, since $\kappa\geq 1$ and $\alpha<2$, we have that $nE|X_0|/a_{np}$ converges to zero, therefore a centering is not needed.
\end{proof}

%
%

\section*{Acknowledgements}
R.D. was supported by ARO MURI grant W911NF-12-1-0385. O.P. thanks the Technische Universität München - Institute for Advanced Study, funded by the German Excellence Initiative, and the International Graduate School of Science and Engineering for financial support. 

\bibliographystyle{abbrvnat}

\end{document}